\newtheorem{theorem}{Theorem}
\newtheorem{Proposition}{Proposition}
\newtheorem{remark}{Remark}
\numberwithin{equation}{section}
\title[evaluation of Exponential sums and small solutions of quadratic forms]{Multiple exponential sums and their applications to quadratic congruences}
\author{Nilanjan Bag \and  Stephan Baier \and Anup Haldar} 
\address{Nilanjan Bag,
Thapar Institute of Engineering and Technology, Department of Mathematics, Patiala, Punjab, 147004, India}
\email{nilanjanb2011@gmail.com}
\address{Stephan Baier,
Ramakrishna Mission Vivekananda Educational and Research Institute, Department of Mathematics, G. T. Road, PO Belur Math, Howrah, West Bengal 711202, India}
\email{stephanbaier2017@gmail.com}
\address{Anup Haldar,
Ramakrishna Mission Vivekananda Educational and Research Institute, Department of Mathematics, G. T. Road, PO Belur Math, Howrah, West Bengal 711202, India}
\email{anuphaldar1996@gmail.com}
\subjclass[2020]{11L40,11T23,11K36} 
\keywords{13.06.2023, quadratic congruences, Poisson summation, evaluation of complete exponential sums}
\begin{document}
\begin{abstract}
In this paper, we develop a method of evaluating general exponential sums with rational amplitude functions for multiple variables which complements work by T. Cochrane and Z. Zheng on the single variable case. As an application, for $n\geq 3$, a fixed natural number, we obtain an asymptotic formula for the (weighted) number of solutions of general quadratic congruences modulo $p^m$ in small boxes, thus establishing an equidistribution result for these solutions. 
\end{abstract}
\maketitle
\tableofcontents

\section{Introduction and main results} 
In this paper, we develop a method for evaluating and estimating general exponential sums with rational amplitude functions for multiple variables. A general pure exponential sum (i.e., without twist by multiplicative characters) in $n$ variables is of the type
\begin{align*}
\mathcal{S}(f,p^m)=\sum_{\substack{{\bf{x}}\in \left(\mathbb{Z}_p/{p^m\mathbb{Z}_p}\right)^n}}e_{p^m}(f({\bf{x}})),
\end{align*}
where $f=f_1/f_2$ is a rational function with integer coefficients such that the reductions of $f_1,f_2\in \mathbb{Z}[x_1,...,x_n]$ modulo $p$ are coprime, $m$ is a natural number, $p$ is a prime and $e_{q}(\cdot)$ is the additive character
\begin{align*}
e_{q}(x)=e^{2\pi i x/q}.
\end{align*} 
Here it is understood that we sum only over ${\bf x}$ such that $f_2({\bf x})\not=0 \bmod{p}$ so that $f({\bf x})$ is well-defined. 
These sums generalize classical pure exponential sums 
\begin{align*}
S(f,p^m)=\sum_{x=1}^{p^m}e_{p^m}(f(x))
\end{align*}
in one variable.
For $m=1$ and polynomials $f\in\mathbb{Z}[x]$, from the work of Weil \cite{weil} on the Riemann Hypothesis for curves over a finite field, it is well known that if the degree of $f\bmod p$, denoted by $d_p(f)$, is greater or equal to $1$, then 
\begin{align*}
|S(f,p)|\leq d_p(f)p^{1/2}.
\end{align*}
 Bombieri \cite[Theorem 5]{BM} sharpened this to a uniform upper bound which holds for rational functions $f$ in one variable. Later Perel'muter \cite{PM} extended Bombieri's result to mixed exponential sums, i.e., to exponential sums twisted with a multiplicative character.
 For $m\ge 2$, there are numerous results on these sums in the literature, mainly for the case of polynomials (see \cite{JRC, JHHC, JHL1, JHL2, IS}, for example). A famous general upper bound for pure exponential sums in one variable is due to Hua \cite{HUA1} who established that
 \begin{align*}
 |S(f,p^m)|\leq Cp^{m(1-1/d)}
 \end{align*}
 for any polynomial $f\in\mathbb{Z}[x]$ of degree $d$ and a constant $C$ depending on $d$, which has been refined in several works. More recently, for $f\in \mathbb{Z}[x]$ and $m\ge 2$, Cochrane and Zheng \cite{CoZ} applied stationary phase-type arguments to derive closed formulas for pure and mixed exponential sums with modulus $p^m$ in certain cases and sharpened upper bounds in general. This has been extended to rational functions $f$ in  \cite{Coc2} (see, in particular, \cite[Theorem 1.1]{Coc2}). In this article, we establish a multivariable version of Cochrane's result for pure exponential sums. These sums were also considered by B. Fisher in \cite{Fisher}, where he adapted Cochrane's methods to obtain closed formulas and upper bounds for these multivariable sums. However, the conditions in his results become rather complicated. Here we present simpler versions which are easier to use for applications. Before we discuss our applications, we will set up the required notations and formulate our results on exponential sums. 

 \subsection{Notations}
Throughout this article, we will use the following notations. 
 \begin{itemize}
\item[1.] We assume that $p$ is an odd prime, $m\ge 2$ is an integer and $f$ is a rational function of the form $f_1/f_2$, where $f_1,f_2\in\mathbb{Z}[x_1,...,x_n]$ are such that the reductions of $f_1$ and $f_2$ modulo $p$ are coprime.

 \item[2.] The gradient of the function $f$ at a point ${\bf x}$, denoted as $\nabla f({\bf x})$, is defined by 
\begin{align*}
        \nabla f({\bf x})=\begin{bmatrix}
             \frac{\partial f}{\partial x_1}({\bf x})\\
              \frac{\partial f}{\partial x_2}({\bf x})\\
              \vdots\\
               \frac{\partial f}{\partial x_n}({\bf x})
        \end{bmatrix}.
\end{align*}

 \item[3.] The Hessian of a function $f$, denoted by Hesse$(f)$, is defined as 
\begin{align*}
    \text{Hesse}(f)=\begin{bmatrix}
       \frac{\partial^2 f}{\partial x_1^2} &  \frac{\partial^2 f}{\partial x_1\partial x_2}& \cdots &\frac{\partial^2 f}{\partial x_1 \partial x_n} \\
       \frac{\partial^2 f}{\partial x_2x_1} &  \frac{\partial^2 f}{\partial x_2^2}&\cdots& \frac{\partial^2 f}{\partial x_2 \partial x_n}\\
       \vdots&\vdots&\cdots&\vdots\\
       \frac{\partial^2 f}{\partial x_n \partial x_1} &  \frac{\partial^2 f}{\partial x_n\partial x_2}& \cdots &\frac{\partial^2 f}{\partial x_n^2}
       \end{bmatrix}.
\end{align*}
 
\item[4.]  For a prime $p$, we define the order of $f$ as 
\begin{align*}
    \text{\rm ord}_p(f)=\text{\rm ord}_p(f_1)-\text{\rm ord}_p(f_2),
\end{align*}
where 
\begin{align*}
    \text{\rm ord}_p(f_i)=\max\{k\in\mathbb{Z}_{\geq 0}|~p^k ~\text{divides all coefficients of}~ f_i\}.
\end{align*}

\item[5.] If $r=\min_{1\leq i\leq n}\{\text{\rm ord}_p(\frac{\partial f}{\partial x_i})\}$, we denote by $\mathcal{C}_f$ the set of critical points defined as 
\begin{align*}
\mathcal{C}_f:=\{{\bf{x}}\in \mathbb{Z}_p^n| p^{-r}\nabla f({\bf x})={\bf{0}}\bmod p\}.
\end{align*}

\item[6.] We denote by $G_q(n)$ the quadratic Gauss sum
$$
G_q(n)=\sum\limits_{x=1}^q e_q(nx^2).
$$

\item[7.] For odd $q\in \mathbb{N}$, let $\left(\frac{\bullet}{q}\right)$ denote the Legendre symbol.

\item[8.] We denote by $\mathbb{Z}_p$ the set of $p$-adic integers, which is represented in the form
\begin{align*}
\mathbb{Z}_p=\{a_0+a_1p+\cdots+a_ip^i+\cdots|~ a_j\in \{0,1,...,p-1\} \mbox{ for all } j\in \mathbb{N}_0\}.
\end{align*}
 
\item[9.] For any $n$-tuple ${\alpha}\in \mathbb{Z}^n$, we define
\begin{align*}
S_{\bf{\alpha}}(f,p^m)=\sum_{\substack{{\bf{x}}\in \left(\mathbb{Z}_p/{p^m\mathbb{Z}_p}\right)^n\\{\bf{x}}\equiv \alpha \bmod p}}e_{p^m}(f({\bf{x}})),
\end{align*}
with the convention that the sum runs over those ${\bf x}$ for which the denominator of $f=f_1/f_2$ satisfies $f_2({\bf x})\not\equiv 0 \bmod{p}$, i.e. $f({\bf x})$ is well-defined modulo $p^m$. 
\item[10.] Throughout the article $\overline{x}$ denotes the multiplicative inverse of $x$ to relevant modulus.
\end{itemize}

We first establish the following evaluation of $S_{{\bf \alpha}}(f,p^m)$.
\begin{Proposition}\label{P1}
Let $p$ be an odd prime and $f$ be a non-constant rational function in $n$ variables defined over $\mathbb{Z}$. Set $r:=\min_{1\leq i\leq n}\{\text{\rm ord}_p(\frac{\partial f}{\partial x_i})\}$. Then we have 
\begin{equation} \label{basiceq}
\begin{split}
S_{\bf{\alpha}}(f,p^m)=\begin{cases}p^{n(m+r)/2}\displaystyle\sum_{\substack{{\bf{y}}\in(\mathbb{Z}_p/p^{(m-r)/2}\mathbb{Z}_{p})^n\\{\bf{y}}\equiv {\bf{\alpha}}\bmod p\\p^{-r}\nabla f({\bf{y}})\equiv 0\bmod p^{(m-r)/2}}}e_{p^{m}}(f({\bf{y}})),~&m-r~\text{is even};\\
p^{(m+r-1)n/2}\displaystyle\sum_{\substack{{\bf{y}}\in(\mathbb{Z}_p/p^{(m-r+1)/2}\mathbb{Z}_{p})^n\\{\bf{y}}\equiv {\bf{\alpha}}\bmod p\\p^{-r}\nabla f({\bf{y}})\equiv 0\bmod p^{(m-r-1)/2}}}e_{p^{m}}(f({\bf{y}})),~&m-r~\text{is odd}.
\end{cases}
\end{split}
\end{equation}
\end{Proposition}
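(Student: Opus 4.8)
The plan is to adapt the $p$-adic stationary-phase (Taylor-expansion) technique of Cochrane--Zheng and Cochrane \cite{CoZ,Coc2} to $n$ variables. I set $\ell=(m-r)/2$ when $m-r$ is even and $\ell=(m-r+1)/2$ when $m-r$ is odd, and split each variable as ${\bf x}={\bf y}+p^{\ell}{\bf z}$, with ${\bf y}$ running over $(\mathbb{Z}_p/p^{\ell}\mathbb{Z}_p)^n$ and ${\bf z}$ over $(\mathbb{Z}_p/p^{m-\ell}\mathbb{Z}_p)^n$; since $\ell\ge 1$ the side condition ${\bf x}\equiv\alpha\bmod p$ becomes ${\bf y}\equiv\alpha\bmod p$, with ${\bf z}$ unrestricted. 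Because $f_2({\bf y})\not\equiv 0\bmod p$ on the range of summation, $f$ admits a convergent multivariable $p$-adic Taylor expansion
\begin{align*}
f({\bf y}+p^{\ell}{\bf z})=f({\bf y})+p^{\ell}\nabla f({\bf y})\cdot{\bf z}+\frac{p^{2\ell}}{2}{\bf z}^{\mathrm{T}}\mathrm{Hesse}(f)({\bf y}){\bf z}+\cdots,
\end{align*}
where the omitted terms gather the partial derivatives of order $\ge 3$.

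The first technical step is to show that modulo $p^m$ only the constant and linear terms survive, and here the definition of $r$ is essential. Each first-order partial has $\mathrm{ord}_p(\partial f/\partial x_i)\ge r$, and differentiating a $p$-integral rational function cannot lower its order, so every partial derivative of order $k\ge 1$ has $\mathrm{ord}_p\ge r$. Hence the order-$k$ Taylor term has $p$-adic valuation at least $k\ell+r-\mathrm{ord}_p(k!)$. Using $\ell\ge 1$, $p$ odd, and $\mathrm{ord}_p(k!)\le (k-1)/(p-1)\le(k-1)/2$, one checks $(k-2)\ell\ge\mathrm{ord}_p(k!)$ for all $k\ge 2$, whence $k\ell+r-\mathrm{ord}_p(k!)\ge 2\ell+r\ge m$. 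Thus every term of order $\ge 2$ vanishes modulo $p^m$. This valuation bookkeeping — in particular guaranteeing that the expansion truncates correctly for \emph{rational} rather than merely polynomial amplitudes, and that the evaluated derivatives $\partial^{\bf k} f({\bf y})$ are $p$-integral at points with $f_2({\bf y})\not\equiv 0\bmod p$ — is the step I expect to be the main obstacle and to require the most care.

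Once the expansion is truncated we have $e_{p^m}(f({\bf y}+p^{\ell}{\bf z}))=e_{p^m}(f({\bf y}))\,e_{p^{m-\ell}}(\nabla f({\bf y})\cdot{\bf z})$, and the sum over ${\bf z}$ factors over coordinates:
\begin{align*}
\sum_{{\bf z}\in(\mathbb{Z}_p/p^{m-\ell}\mathbb{Z}_p)^n}e_{p^{m-\ell}}(\nabla f({\bf y})\cdot{\bf z})=\prod_{i=1}^{n}\sum_{z_i\bmod p^{m-\ell}}e_{p^{m-\ell}}\!\left(\frac{\partial f}{\partial x_i}({\bf y})\,z_i\right).
\end{align*}
By orthogonality each inner sum equals $p^{m-\ell}$ when $p^{m-\ell}\mid\partial f/\partial x_i({\bf y})$ and vanishes otherwise, so the product equals $p^{n(m-\ell)}$ exactly when $\nabla f({\bf y})\equiv{\bf 0}\bmod p^{m-\ell}$, that is $p^{-r}\nabla f({\bf y})\equiv{\bf 0}\bmod p^{m-\ell-r}$, and is $0$ otherwise. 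In the even case $m-\ell=(m+r)/2$ and $m-\ell-r=(m-r)/2$, which recovers the prefactor $p^{n(m+r)/2}$ and the congruence modulus $p^{(m-r)/2}$; the odd case gives $m-\ell=(m+r-1)/2$ and $m-\ell-r=(m-r-1)/2$, matching the second line of \eqref{basiceq}.

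Finally I would check that the surviving sum is well posed. Replacing a representative ${\bf y}$ by ${\bf y}+p^{\ell}{\bf w}$ alters $p^{-r}\nabla f({\bf y})$ only modulo $p^{\ell}$ (the correction is $p^{\ell}p^{-r}\mathrm{Hesse}(f)({\bf y}){\bf w}+\cdots$, with $p^{-r}\mathrm{Hesse}(f)$ $p$-integral), and on the critical set $p^{-r}\nabla f({\bf y})\equiv{\bf 0}\bmod p^{\ell}$ it alters $f({\bf y})$ only modulo $p^m$, again by the truncated expansion. Hence both the condition and the weight $e_{p^m}(f({\bf y}))$ descend to $(\mathbb{Z}_p/p^{\ell}\mathbb{Z}_p)^n$, exactly as the ranges in \eqref{basiceq} require. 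The degenerate regime $m-r\le 1$ (where $\ell=0$, or the quadratic term need not be discarded) should be recorded separately, but the computation above is the heart of the argument.
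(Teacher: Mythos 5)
Your proof is correct and takes essentially the same route as the paper: the identical splitting ${\bf x}={\bf y}+p^{\ell}{\bf z}$ with $\ell=(m-r)/2$ or $(m-r+1)/2$, truncation of the $p$-adic Taylor expansion modulo $p^m$ using $\mathrm{ord}_p(\partial^{\bf j}f)\ge r$, and orthogonality in the ${\bf z}$-sum to produce the critical-point condition. The only differences are cosmetic: you bound the Taylor tail via $\mathrm{ord}_p(k!)\le (k-1)/(p-1)$ where the paper uses a slightly different valuation inequality, and you add an explicit check that the condition and the weight $e_{p^m}(f({\bf y}))$ descend to residues modulo $p^{\ell}$, a point the paper leaves implicit.
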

This will give rise to our first main result below. 
\begin{theorem} \label{MT1}
Let $p$ be an odd prime and $f$ be a non-constant rational function in $n$ variables defined over $\mathbb{Z}$. Let $m\in \mathbb{N}$, define $r$ as above and assume that $m-r\ge 2$. Then we have the following.
\begin{itemize}
\item[(i)] If ${\bf{\alpha}}\not\in \mathcal{C}_f$, then 
\begin{align*}
S_{\bf{\alpha}}(f,p^m)=0.
\end{align*}

\item[(ii)] If ${\bf{\alpha}}\in \mathcal{C}_{f}$ and {\rm det(Hesse}$(f)({\bf{\alpha}}))\not\equiv 0 \bmod{p}$, then 
\begin{align*}
S_{\bf{\alpha}}(f,p^m)=\begin{cases}p^{n(m+r)/2}e_{p^m}(f({\bf{{\bf{\alpha}^{*}}}})),~&m-r~\text{\rm is even};\\
\left(\frac{\text{\rm det}(\overline{2}p^{-r}\text{\rm Hesse}(f)({\bf{\alpha}^*}))}{p}\right)p^{(m+r-1)n/2}e_{p^{m}}(f({\bf{\alpha}^*}))G_p(1)^n,~&m-r~\text{\rm is odd},
\end{cases}
\end{align*}
where $\overline{2}$ is a multiplicative inverse of $2$ modulo $p^m$ and $\bf{\alpha}^{*}$ is the unique lifting of a solution $\alpha$ of the congruence $p^{-r}\nabla f({\bf x})={\bf{0}}\bmod p$ to a solution of the congruence $p^{-r}\nabla f({\bf{x}})\equiv 0 \bmod p^{m}$.
\end{itemize}
\end{theorem}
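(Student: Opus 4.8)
The plan is to read both parts off Proposition \ref{P1} by analyzing the solution set of the gradient congruence that appears in the inner sum of \eqref{basiceq}, treating the even and odd cases of $m-r$ in parallel. Part (i) is essentially immediate. Since $m-r\ge 2$, in the even case the inner constraint is $p^{-r}\nabla f({\bf y})\equiv 0\bmod p^{(m-r)/2}$ with $(m-r)/2\ge 1$, and in the odd case (where necessarily $m-r\ge 3$) it is $p^{-r}\nabla f({\bf y})\equiv 0\bmod p^{(m-r-1)/2}$ with $(m-r-1)/2\ge 1$; in both cases the constraint holds at least modulo $p$. Reducing it modulo $p$ and using ${\bf y}\equiv\alpha\bmod p$ forces $p^{-r}\nabla f(\alpha)\equiv 0\bmod p$, i.e. $\alpha\in\mathcal{C}_f$. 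Hence if $\alpha\notin\mathcal{C}_f$ the index set of the inner sum is empty, so $S_{\alpha}(f,p^m)=0$.

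For part (ii) the starting observation is that the Jacobian of the map ${\bf y}\mapsto p^{-r}\nabla f({\bf y})$ at $\alpha$ is $p^{-r}\text{Hesse}(f)(\alpha)$, which is nonsingular modulo $p$ by hypothesis. The multivariable Hensel lemma then yields a unique $p$-adic lift $\alpha^*$ of $\alpha$ solving $p^{-r}\nabla f({\bf x})\equiv 0$, and shows this is the only solution reducing to $\alpha$ modulo any $p^\ell$; in particular $p^{-r}\nabla f(\alpha^*)\equiv 0\bmod p^m$, so $\nabla f(\alpha^*)\equiv 0\bmod p^{m+r}$. Feeding this into \eqref{basiceq} in the even case, the inner sum collapses to the single term ${\bf y}\equiv\alpha^*\bmod p^{(m-r)/2}$. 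Writing ${\bf y}=\alpha^*+p^{(m-r)/2}{\bf t}$ and Taylor-expanding, the linear term is divisible by $p^{(m-r)/2}\cdot p^{m+r}$ and the quadratic (and higher) terms by $p^{m-r}\cdot p^{r}=p^{m}$, so $f({\bf y})\equiv f(\alpha^*)\bmod p^m$ and the clean formula $S_{\alpha}(f,p^m)=p^{n(m+r)/2}e_{p^m}(f(\alpha^*))$ drops out.

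The substance lies in the odd case. Here the gradient constraint holds only modulo $p^{(m-r-1)/2}$ while ${\bf y}$ runs modulo $p^{(m-r+1)/2}$, so by the uniqueness above the admissible ${\bf y}$ form the coset ${\bf y}=\alpha^*+p^{(m-r-1)/2}{\bf t}$ with ${\bf t}\in(\mathbb{Z}/p\mathbb{Z})^n$. Taylor-expanding $f$ about $\alpha^*$ and using $\nabla f(\alpha^*)\equiv 0\bmod p^{m+r}$ to discard the linear term, the quadratic term reduces modulo $p^m$ to $e_p\!\big(\overline{2}\,{\bf t}^{\mathsf T} H\,{\bf t}\big)$ with $H=p^{-r}\text{Hesse}(f)(\alpha^*)$, while the terms of order $\ge 3$ vanish modulo $p^m$ because $m-r\ge 3$. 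It then remains to evaluate the $n$-dimensional Gaussian sum $\sum_{{\bf t}}e_p(\overline{2}\,{\bf t}^{\mathsf T}H{\bf t})$. Diagonalizing the nonsingular symmetric form $\overline{2}H$ over $\mathbb{F}_p$ as $\mathrm{diag}(d_1,\dots,d_n)$ factors it into one-variable Gauss sums $\sum_{x}e_p(d_i x^2)=\left(\frac{d_i}{p}\right)G_p(1)$; since $\prod_i d_i\equiv\det(\overline{2}H)$ up to a square and the Legendre symbol is square-class invariant, the product equals $\left(\frac{\det(\overline{2}\,p^{-r}\text{Hesse}(f)(\alpha^*))}{p}\right)G_p(1)^n$. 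Multiplying by the prefactor $p^{(m+r-1)n/2}$ and by $e_{p^m}(f(\alpha^*))$ gives exactly the stated formula.

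The main obstacle I anticipate is the bookkeeping in the odd case, specifically the rigorous justification that every Taylor term of order $\ge 3$ is $\equiv 0\bmod p^m$ for the rational function $f$: one must control the factorial denominators (delicate when $p$ is small) together with the denominators coming from $f_2$, which are $p$-units near $\alpha^*$, and confirm that all derivatives of order $\ge 2$ carry the expected factor $p^{r}$. The other delicate point is tracking the determinant only up to squares while keeping the factor $\overline{2}$ through the diagonalization, so that the Legendre symbol and the power $G_p(1)^n$ emerge with precisely the normalization claimed.
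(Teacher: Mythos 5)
Your proposal is correct and takes essentially the same route as the paper: part (i) is read off Proposition \ref{P1}, and part (ii) proceeds by Hensel lifting to $\alpha^*$, a second-order Taylor expansion on the coset $\alpha^*+p^{(m-r-1)/2}\mathbf{t}$, and diagonalization of the resulting quadratic Gauss sum over $\mathbb{F}_p$. The only notable difference is cosmetic but in your favour: you diagonalize the form by a general congruence and track the determinant only up to squares, which is sounder than the paper's appeal to an orthogonal diagonalization over $\mathbb{F}_p$, and the small-$p$ Taylor-remainder issue you flag is equally present in the paper's own argument.
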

%
%

The case of critical points at which the Hesse matrix is singular (i.e., {\rm det(Hesse}$(f)({\bf{\alpha}}))\equiv 0 \bmod{p}$) requires more refined arguments and will not be considered in this paper. This may be subject to future research. 

Since complete exponential sums with rational functions appear frequently in analytic number theory, it may be expected  that Theorem 1 can be applied to many situations. In this article, we will employ it to derive an asymptotic formula for the number of solutions of special quadratic congruences in several variables modulo prime powers in small boxes. Here the relevant exponential sums are quadratic Gauss sums in several variables with certain restrictions on the summation variables. Closely related exponential sums were evaluated explicitly in \cite[sections 10]{HB-circle} for prime and prime square moduli. We infer our evaluations for arbitrary prime power moduli in a straight-forward way from the much more general Theorem 1 above.    
In the following, we give some motivation for the problem under consideration. 

Let $Q(x_1,...,x_n)$ be a quadratic form with integer coefficients. The question of detecting small solutions of congruences of the form
\begin{equation} \label{ourcongruence}
Q(x_1,...,x_n)\equiv 0 \bmod{q}
\end{equation}
has received a lot of attention (see, in particular, \cite{BH1}, \cite{BH-UP}, \cite{Hak}, \cite{AH1},
\cite{Hea1}, \cite{Hea2}, \cite{Hea3}, \cite{SSS}).  For the case $n=3$, a result by Schinzel, Schlickewei and Schmidt \cite{SSS} implies that there is always a non-zero solution $(x_1,x_2,x_3)\in \mathbb{Z}^3$ to \eqref{ourcongruence} such that $\max\{|x_1|,|x_2|,|x_3|\}=O(q^{2/3})$, where the $O$-constant is absolute.  For $q$ square-free, the exponent $2/3$ was improved to $5/8$ by Heath-Brown \cite{Hea3}. Of particular interest is the complementary case when $q=p^m$ is a power of a fixed prime $p$ and $m$ tends to infinity. This was considered by Hakimi \cite{Hak} with special emphasis on forms with a large number of variables, as compared to the exponent $m$. For diagonal forms 
\begin{align}\label{Q-form}
Q_{{\bf{\alpha}}}(x_1,...,x_n)=\alpha_1x_1^2+\cdots +\alpha_nx_n^2
\end{align}
modulo prime powers $q=p^m$ with an {\it arbitrary} sequence $\alpha=(\alpha_1,...,\alpha_n)$ of coefficients (i.e. $\alpha$ not depending on $m$), the second- and third-named authors \cite{BH1} improved the said exponent $2/3$ to $11/18$ under the stronger condition $(x_1\cdots x_n,p)=1$ on the solutions $(x_1,...,x_n)$ of \eqref{ourcongruence} (see \cite[Theorem 2]{BH1}). A result by Cochrane \cite{Coc} for general moduli $q$ implies that for any form $Q(x)$ with ${\it fixed}$ coefficients, there is a non-zero solution with $\max\{|x_1|,|x_2|,|x_3|\}=O(q^{1/2})$, where the $O$-constant may depend on the form. In the above-mentioned paper \cite{BH1}, the second- and third-named authors refined this result into an asymptotic formula for the case of fixed diagonal forms modulo prime powers (see \cite[Theorem 1]{BH1}). This was generalized by the third-named author in \cite{AH1} to general quadratic forms $ax^2+bxy+cy^2+dxz+eyz+fz^2$ with fixed coefficients. 

In the present paper, we are interested in asymptotic formulas for the number of solutions $(x_1,...,x_{n})$ of the congruence 
\begin{equation} \label{c}
Q(x_1,x_2,...,x_{n})\equiv 0 \bmod p^m
\end{equation}
for a general quadratic form
\begin{align*}
 Q(x_1,x_2,...,x_{n})= \sum_{i,j=1}^n a_{ij}x_ix_j, ~a_{ij}\in\mathbb{Z}.
\end{align*}
Here we assume that $A=(a_{ij})_{1\le i,j\le n}$ is a symmetric matrix such that $(\det(A),p)=1$, i.e. $Q$ is non-singular modulo $p$.   
 We count weighted solutions in boxes $I_1\times \cdots\times I_{n}$, where $I_1,...,I_{n}$ are intervals of length $2N$, which we aim to take as small as possible compared to the modulus. 
The main novelties in this paper, as compared to the previous works \cite{BH1} and \cite{AH1}, are two-fold. Firstly, we now use evaluations of {\it multivariable} exponential sums, as provided in Proposition \ref{P1} and Theorem \ref{MT1}. Secondly, we infer our results directly from estimates for these exponential sums, avoiding a parametrization of solutions of quadratic congruences which was used in \cite{BH1} and \cite{AH1}. This simplifies our evaluations a lot, and the method becomes more elegant.

The asymptotic behaviour for the number of solutions may change drastically around the point $N=q^{1/2}=p^{m/2}$. If $N\ge q^{1/2+\varepsilon}$, we will obtain an asymptotic for the number of solutions in question of the form $\sim b_p\cdot N^{n}/q$ for a certain constant $b_p$ depending on $p$. However, if $N\le q^{1/2-\varepsilon}$ and our box is centered as ${\bf 0}$, then the congruence \eqref{c}
collapses into an equation
$$
Q(x_1,x_2,...,x_{n})= 0
$$ 
since in this case the squares of the variables are much smaller than the modulus $q$. Here  the number of solutions satisfies an asymptotic of the form $\sim c N^{n-2}(\log N)^{r_n}$, where $c$ is a product of local densities and $r_n=1$ if $n\le 4$ and $r_n=0$ if $n>4$. In particular, in the case $n=2$, we count Pythagorean triples. 
We point out that the case of Pythagorean triples modulo prime powers was considered in the unpublished work \cite{BH-UP}.
In \cite{AH1} the third-named author studied the general quadratic form for $n=3$ with some restrictions to the coefficients of the form. In this paper, we study arbitrary non-singular quadratic from for $n$ variables without any such additional restrictions. We establish the following. 
\begin{theorem} \label{MT2}
Assume that $A=(a_{ij})_{1\le i,j\le n}$ is a symmetric matrix with integer coefficients such that $(\det(A),p)=1$, i.e. $Q$ is non-singular modulo $p$.   
Let $\varepsilon>0$ and $n(\geq 3)\in\mathbb{N}$ be fixed, $\Phi:\mathbb{R}\rightarrow \mathbb{R}_{\ge 0}$ be a Schwartz class function and $p$ be an odd prime. Let $(x_{0,1},x_{0,2},...,x_{0,n})$ be a fixed point in $\mathbb{Z}^{n}$. Then as $m\rightarrow \infty$, we have the asymptotic formula
\begin{equation} \label{main}
\sum\limits_{\substack{(x_1,...,x_{n})\in \mathbb{Z}^{n}\\ (x_1,...,x_n)\not\equiv (0,...,0)\bmod p\\\sum_{i,j=1}^n a_{ij}x_ix_j\equiv 0 \bmod{p^m}\\
}}
\prod_{i=1}^{n}\Phi\left(\frac{x_i-x_{0,i}}{N}\right)\sim b_p\cdot
\hat{\Phi}(0)^{n}\cdot \frac{N^{n}}{p^m},
\end{equation}
if $N\ge  p^{(1/2+\varepsilon)m}$, where $b_p=\#\mathcal{A}/p^{m(n-1)}$ with
\begin{align*}
 \mathcal{A}&=\left\{(x_1,...,x_{n})\in\mathbb{Z}^n|~1\leq x_1,...,x_{n}\leq p^m,~  (x_1,...,x_n)\not\equiv (0,...,0) \bmod p, \ \sum_{i,j=1}^n a_{ij}x_ix_j\equiv 0 \bmod p^m\right\}.
    \end{align*}
\end{theorem}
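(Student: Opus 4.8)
The plan is to attack the weighted count via Poisson summation in all $n$ variables, which is the natural tool given that the weights $\prod_i \Phi((x_i-x_{0,i})/N)$ are Schwartz and the congruence condition is periodic modulo $p^m$. First I would detect the congruence $Q(\mathbf{x})\equiv 0 \bmod p^m$ using additive characters, writing
\begin{equation*}
\mathbf{1}_{Q(\mathbf{x})\equiv 0\,(p^m)}=\frac{1}{p^m}\sum_{t=0}^{p^m-1}e_{p^m}(tQ(\mathbf{x})).
\end{equation*}
The coprimality restriction $(x_1,\dots,x_n)\not\equiv\mathbf{0}\bmod p$ should be inserted by a short inclusion–exclusion, peeling off the contribution of $\mathbf{x}\equiv\mathbf{0}\bmod p$ (which amounts to rescaling by $p$ and handling $Q$ on $p\mathbb{Z}^n$ separately, a genuinely smaller contribution for $n\ge 3$). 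After swapping the order of summation, the main object becomes
\begin{equation*}
\frac{1}{p^m}\sum_{t=0}^{p^m-1}\ \sum_{\mathbf{x}\in\mathbb{Z}^n}e_{p^m}(tQ(\mathbf{x}))\prod_{i=1}^n\Phi\!\left(\frac{x_i-x_{0,i}}{N}\right),
\end{equation*}
and I would group the $t$-sum by $d=(t,p^m)$, writing $t=p^{m-k}u$ with $(u,p)=1$, so that $tQ$ has effective modulus $p^k$.

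The key step is to apply $n$-dimensional Poisson summation to the inner sum over $\mathbf{x}$, splitting $\mathbf{x}$ into residue classes modulo the relevant modulus $p^k$. This converts the sum over $\mathbf{x}$ into a sum over dual frequencies $\mathbf{h}\in\mathbb{Z}^n$ of a complete exponential sum in $\mathbf{h}$ against $\widehat{\Phi}$ evaluated at $Nh_i/p^k$. Because $\widehat{\Phi}$ is Schwartz and $N\ge p^{(1/2+\varepsilon)m}$, only $\mathbf{h}=\mathbf{0}$ survives up to negligible error: the frequencies $h_i\ne 0$ force the argument $Nh_i/p^k\gg p^{\varepsilon m}$ to be large whenever $k\le m$, killing those terms. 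The surviving $\mathbf{h}=\mathbf{0}$ term is $(N^n/p^{nk})\widehat{\Phi}(0)^n$ times a \emph{complete} quadratic Gauss sum in $n$ variables, namely $\sum_{\mathbf{y}\bmod p^k}e_{p^k}(u Q(\mathbf{y}))$. This is precisely where Theorem \ref{MT1} (equivalently the Gauss-sum evaluation for the diagonalized form) enters: since $(\det A,p)=1$, the form $Q$ is non-degenerate modulo $p$, its Hessian is $2A$ with $\det(2A)$ a unit, and Theorem \ref{MT1} evaluates this complete sum as $p^{nk/2}$ times a fourth-root-of-unity factor involving $\left(\frac{\det(\overline 2\,uA)}{p}\right)$ and a power of $G_p(1)$.

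Assembling these evaluations, the main term of size $N^n/p^m$ emerges from the $t=0$ term (the trivial frequency with $k=0$), giving $p^{-m}\cdot N^n\widehat{\Phi}(0)^n$, while the nonzero-$t$ terms reassemble into the arithmetic factor. Concretely, I would show that $\sum_{k}\sum_{(u,p)=1}$ of the Gauss-sum evaluations, after the geometric bookkeeping, produces exactly the constant $b_p=\#\mathcal{A}/p^{m(n-1)}$; the cleanest route is to recognize that $b_p\cdot N^n/p^m$ is forced by comparing the unweighted local count $\#\mathcal{A}$ with the same Poisson expansion applied to the indicator of the full box, so the Gauss-sum sum need only be identified with the local density $\#\mathcal{A}/p^{m(n-1)}$. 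The main obstacle I anticipate is controlling the range $k\le m/2$ versus $k>m/2$: for small $k$ the Gauss sums are large (size $p^{nk/2}$) and one must verify their sum converges to the stated constant rather than overwhelming the main term, which relies on the $n\ge 3$ hypothesis to ensure $p^{nk/2}/p^{nk}=p^{-nk/2}$ decays fast enough, together with the sharp cutoff $N\ge p^{(1/2+\varepsilon)m}$ guaranteeing that no dual frequency other than $\mathbf{0}$ contributes before the modulus $p^k$ can reach $p^m$.
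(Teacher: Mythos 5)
Your overall framework (character detection of the congruence, $n$-dimensional Poisson summation, evaluation of the resulting complete quadratic exponential sums via Theorem \ref{MT1}) matches the paper's, but there is a genuine gap at the decisive step: the claim that only the dual frequency $\mathbf{h}=\mathbf{0}$ survives. With $N\ge p^{(1/2+\varepsilon)m}$ and effective modulus $p^k$, the argument $Nh_i/p^k$ is $\gg p^{\varepsilon m}$ only when $k\le m/2$; for $k>m/2$ (in particular for the generic case $(t,p^m)=1$, i.e.\ $k=m$) there are roughly $(p^k/N)^n$, hence up to about $p^{(1/2-\varepsilon)mn}$, nonzero frequencies $\mathbf{h}$ that the decay of $\hat\Phi$ does not kill. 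Handling these is the entire content of the theorem: if they really were negligible, the result would follow trivially, but only in the uninteresting range $N\ge p^{(1+\varepsilon)m}$. Your concluding paragraph worries instead about the size of the Gauss sums in the $\mathbf{h}=\mathbf{0}$ term, but that term is harmless --- summed over $t$ it reproduces exactly $\#\mathcal{A}/p^{mn}$ by orthogonality, so there is no convergence issue to check there.

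What is missing, and what the paper supplies, is the treatment of the twisted sums $\sum_{\mathbf{y}\bmod p^m}e_{p^m}(hQ(\mathbf{y})+\mathbf{k}\cdot\mathbf{y})$ for $\mathbf{k}\ne\mathbf{0}$. Writing $(k_1,\dots,k_n,p^m)=p^r$, the stationary-phase evaluation (Proposition \ref{P1} and Theorem \ref{MT1}) shows these vanish unless $\text{\rm ord}_p(h)=r$, and otherwise equal $p^{(m+r)n/2}$ (up to Legendre-symbol and Gauss-sum factors) times $e_{p^{m-r}}\left(C\cdot\overline{l}\cdot Q'(L)\right)$, where $Q'(L)=L^t\,\text{adj}(A)^t L$ is the dual form, $L=\mathbf{k}/p^r$ and $l=h/p^r$. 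Summing over the unit part $l$ then gives complete cancellation unless $Q'(L)\equiv 0\bmod p^{m-r-1}$; since $N\ge p^{(1/2+\varepsilon)m}$ restricts the surviving frequencies to $|l_i|\ll p^{m-r+m\varepsilon}/N$, this congruence collapses to the equation $Q'(L)=0$, and Heath-Brown's bound $O(L_r^{n-2+\varepsilon})$ for lattice points on a nonsingular quadric (Proposition \ref{Pytha}, which is where $n\ge 3$ is genuinely used) makes the total error $o(N^n/p^m)$. Without this chain --- dual form in the phase, cancellation in the $l$-sum, and the point count on $Q'=0$ --- the argument does not close in the stated range of $N$.
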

\begin{remark}
Note that $b_p$ is independent of $m$. We have
$$
\sharp\mathcal{A}=p^{(n-1)(m-1)}\sharp \left\{(x_1,...,x_{n})\in\mathbb{Z}^n|~1\leq x_1,...,x_{n}\leq p,~  (x_1,...,x_n)\not\equiv (0,...,0) \bmod p, \ \sum_{i,j=1}^n a_{ij}x_ix_j\equiv 0 \bmod p\right\},
$$
which can be shown using a Hensel's type argument. To elaborate more, assume that  $(x_1+k_1p^k,...,x_n+k_np^k)$ is a solution of $Q(x_1,...,x_n)=\sum_{i,j=1}^n a_{ij}x_ix_j$ modulo $p^{k+1}$, where $(x_1,...,x_k)$ is a solution modulo $p^k$ and $0\leq k_i\leq p-1$. Using Taylor expansion, $Q(x_1+k_1p^k,...,x_n+k_np^k)=Q(x_1,...,x_n)+p^k(k_1,...,k_n)\cdot \nabla Q(x_1,...,x_n)\bmod p^{k+1}$. This gives 
\begin{align*}
   (k_1,...,k_n)\cdot\nabla Q(x_1,...,x_n)\equiv -  \frac{Q(x_1,...,x_n)}{p^k}\bmod p,
\end{align*}
which has $p^{n-1}$ number of solutions in $(k_1,...,k_n)$. This proves our claim by lifting up to modulo $p^m$. We have $b_p>0$ as a consequence of \cite[subsection 1.7, Proposition 4]{Serre}. 
\end{remark}
%
Our proof of Theorem \ref{MT2} above follows the main lines in \cite{BH1} and \cite{AH1}, but we proceed without using parametrization. Instead
we directly apply Poisson summation and evaluate the resulting exponential sums explicitly
to prove our result. \\ \\
{\bf Acknowledgements.} The authors would like to thank the anonymous referees for their valuable comments. They would also like to thank the Ramakrishna Mission Vivekananda Educational and Research Institute for providing excellent working conditions. A.H. would like to thank Professor S.D. Adhikari for his suggestion to consider quadratic forms of $n$ variables. 
During the preparation of this article, N.B. was supported by the National Board
of Higher Mathematics post-doctoral fellowship (No.: 0204/3/2021/R\&D-II/7363)
and A.H. would like to thank CSIR, Government of India for financial support in the form of a Senior Research Fellowship (No.: 09/934(0016)/2019-EMR-I).\\ \\
{\bf Data availability statement.}  Data sharing is not applicable to this article as no new data were created or analyzed in this study.\\ \\
{\bf Conflict of interest statement.} The authors declare no conflicts of interest regarding this manuscript.

\section{Preliminaries}
The following preliminaries will be needed in the course of this paper. 
Let $q>0$ be an odd integer. In this case, whenever $(n,q)=1$, we have 
\begin{align}\label{gauss-1}
G_q(n)=\left(\frac{n}{q}\right)G_q(1),
\end{align}
where $\left(\frac{n}{q}\right)$ is the Jacobi symbol. 

The next required tool is the Poisson summation formula.

\begin{Proposition}[Poisson summation formula] \label{Poisson} Let $\Phi : \mathbb{R}\rightarrow \mathbb{R}$ be a Schwartz class function, $\hat\Phi$ its Fourier transform and $P>0$, $x$ be real numbers. Then
$$
\sum\limits_{n\in \mathbb{Z}} \Phi(x+nP)=\sum\limits_{n\in \mathbb{Z}} \frac{1}{P}\hat\Phi\left(\frac{n}{P}\right)\cdot e_P(nx).
$$

\end{Proposition}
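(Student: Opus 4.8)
The plan is to prove the identity by the classical route of periodization followed by Fourier series expansion. First I would define the periodized function
$$
F(x)=\sum_{n\in\intz}\Phi(x+nP),
$$
and observe that because $\Phi$ lies in the Schwartz class, the decay estimate $|\Phi(t)|\ll_N (1+|t|)^{-N}$, valid for every $N$, guarantees that this series — and every series obtained from it by term-by-term differentiation — converges absolutely and uniformly on compact subsets of $\rear$. Hence $F$ is a well-defined $C^\infty$ function, and it is manifestly $P$-periodic, since replacing $x$ by $x+P$ merely reindexes the summation variable $n$.

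Next, since $F$ is smooth and $P$-periodic, I would expand it in its Fourier series
$$
F(x)=\sum_{k\in\intz}c_k\, e_P(kx),\qquad c_k=\frac{1}{P}\int_0^P F(x)\,e_P(-kx)\,\dif x,
$$
the pointwise (indeed uniform) convergence of the series to $F$ being a standard consequence of the rapid decay of the coefficients $c_k$, which in turn follows from the smoothness of $F$. I would then compute these coefficients by inserting the definition of $F$ and interchanging summation and integration — legitimate by the uniform convergence noted above — to obtain
$$
c_k=\frac{1}{P}\sum_{n\in\intz}\int_0^P\Phi(x+nP)\,e_P(-kx)\,\dif x.
$$

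The decisive step is the substitution $u=x+nP$ in the $n$-th integral. Since $e_P(-kx)=e_P\bigl(-k(u-nP)\bigr)=e_P(-ku)$, because the extra factor $e_P(knP)=e^{2\pi i kn}$ equals $1$, the $n$-th integral becomes $\int_{nP}^{(n+1)P}\Phi(u)\,e_P(-ku)\,\dif u$, and summing over $n\in\intz$ reassembles the ranges $[nP,(n+1)P]$ into all of $\rear$. This yields
$$
c_k=\frac{1}{P}\int_{-\infty}^{\infty}\Phi(u)\,e^{-2\pi i ku/P}\,\dif u=\frac{1}{P}\,\hat\Phi\!\left(\frac{k}{P}\right),
$$
with the Fourier transform normalized as $\hat\Phi(\xi)=\int_{\rear}\Phi(u)\,e^{-2\pi i u\xi}\,\dif u$. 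Substituting this expression for $c_k$ back into the Fourier series for $F$ gives exactly the asserted identity. The only points demanding genuine care are the justification of the termwise interchange of sum and integral and the pointwise convergence of the Fourier series to $F$; both are handled cleanly by the Schwartz decay of $\Phi$, which forces $F$ to be smooth and its Fourier coefficients $c_k$ to decay faster than any power of $|k|$. I expect no further obstacle.
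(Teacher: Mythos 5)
Your argument is correct and is precisely the standard periodization-plus-Fourier-series proof; the paper itself does not write out a proof but simply cites Stein--Shakarchi, where this exact argument appears. All the analytic points you flag (uniform convergence of the periodized sum, interchange of sum and integral, convergence of the Fourier series) are indeed the only ones needing care, and the Schwartz hypothesis handles them as you say.
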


\begin{proof}
For a proof, see \cite[section 3]{StSa}. 
\end{proof}

We also need to estimate the number of solutions of the corresponding {\it equation}
$$
\sum_{i,j=1}^n b_{ij}x_ix_j=0.
$$
This is carried out in the following proposition which is a consequence of a result in \cite{HB-circle}.

\begin{Proposition}[Heath-Brown] \label{Pytha}
Let $n\ge 3$ be an integer and $\varepsilon>0$. Then for any non-singular quadratic form $Q(x_1,...,x_n)=\sum_{i,j=1}^{n}b_{ij}x_ix_j$ with integer coefficients, we have  
$$
\sharp\{(x_1,x_2,...,x_{n})\in \mathbb{Z}^n  : Q(x_1,...,x_n)=0, \ |x_i|\le N\}= O(N^{n-2+\varepsilon}),
$$
as $N\rightarrow \infty$, where the implied constant depends on $Q$ and $\varepsilon$.
\end{Proposition}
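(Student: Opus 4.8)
The plan is to pass from the sharp box to a smooth weighted count and then invoke the circle-method analysis of \cite{HB-circle}, treating the delicate ternary case separately. First I would fix a smooth majorant: choose $w\colon \mathbb{R}^n\to\mathbb{R}_{\ge 0}$ of Schwartz class with $w(\boldsymbol\xi)\ge 1$ whenever $|\xi_i|\le 1$ for all $i$. Then
\[
\sharp\{\mathbf{x}\in\mathbb{Z}^n:Q(\mathbf{x})=0,\ |x_i|\le N\}\le \sum_{\substack{\mathbf{x}\in\mathbb{Z}^n\\ Q(\mathbf{x})=0}} w\!\left(\mathbf{x}/N\right),
\]
so it suffices to bound the right-hand side, which is precisely the kind of quantity for which the delta-method of \cite{HB-circle} produces an asymptotic formula.

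Second, for $n\ge 4$ I would apply the main asymptotic of \cite{HB-circle} for a non-singular quadratic form, which has the shape $\sum_{Q(\mathbf{x})=0} w(\mathbf{x}/N)=\mathfrak{S}\,\sigma_\infty\,N^{n-2}+O(N^{n/2+\varepsilon})$, where $\sigma_\infty$ is the (bounded) singular integral attached to $w$ and $Q$ and $\mathfrak{S}$ is the singular series. Since $\det(A)\neq 0$, the local densities are well controlled and $\mathfrak{S}=O_\varepsilon(N^\varepsilon)$ (indeed $\mathfrak{S}=O(1)$ once $n\ge 5$), so the main term is $O(N^{n-2+\varepsilon})$. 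As $n/2\le n-2$ exactly when $n\ge 4$, the error term is also $O(N^{n-2+\varepsilon})$ in this range, and the claimed bound follows for all $n\ge 4$.

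The main obstacle is the ternary case $n=3$, where the generic circle-method error $N^{3/2}$ exceeds the target $N^{1+\varepsilon}$, so a purely analytic treatment is too lossy and one must exploit the geometry of the conic $\{Q=0\}\subset\mathbb{P}^2$. If $Q$ is anisotropic, the equation has only the trivial zero and the count is $O(1)$; otherwise the smooth conic carries a rational point and is the image of a degree-two morphism $\mathbb{P}^1\to\mathbb{P}^2$, so a primitive integer zero $\mathbf{x}$ with $|x_i|\le N$ corresponds to a rational point of height $O(N)$ on the conic, and there are $O(N)$ of these (they arise from points of $\mathbb{P}^1$ of height $O(N^{1/2})$). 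Writing a general zero as $d$ times a primitive one with $d\le N$, the imprimitive contribution is $\sum_{d\le N} O(N/d)=O(N\log N)$, whence the total is $O(N^{1+\varepsilon})$, as required. Combining this ternary bound with the circle-method bound for $n\ge 4$ establishes the proposition, the non-singularity hypothesis being used throughout to guarantee both the smoothness of the quadric and the control of the local densities.
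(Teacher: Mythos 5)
Your argument is correct, and its first half coincides with the paper's proof: the authors also replace the sharp box by a smooth Schwartz majorant $w(t_1,\dots,t_n)=\prod_i\Psi(t_i)$ and then bound $\sum_{Q(\mathbf{x})=0}w(\mathbf{x}/N)$ by appealing to the asymptotic formula of \cite[Theorem 5]{HB-circle}. The difference is that the paper invokes Heath-Brown's result uniformly for all $n\ge 3$ and stops there, whereas you trust the circle method only for $n\ge 4$ and give a separate, self-contained treatment of the ternary case via the geometry of the conic (anisotropic forms have only the trivial zero; isotropic ones are parametrized by binary quadratic forms, giving $O(N)$ primitive zeros of height $\le N$ and $O(N\log N)$ zeros in all). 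Your caution about $n=3$ is well founded in principle --- the singular series diverges there and the naive error term $N^{3/2}$ is too large --- but Heath-Brown's paper does contain the required ternary estimates, so the paper's one-line citation is legitimate; your version simply makes the ternary case independent of the delta-method machinery. It is worth noting that your conic-parametrization argument is essentially the approach used by two of the authors in their earlier works \cite{BH1} and \cite{AH1}, which the present paper explicitly set out to avoid. Two small points to tidy up if you keep your route: for $n=4$ with square determinant the Hardy--Littlewood main term acquires a logarithmic factor (harmless, being $O(N^\varepsilon)$), and in the ternary parametrization the coprime parameters $(s,t)$ produce primitive solutions only up to a bounded gcd dividing a fixed multiple of $\det(A)$, which affects only the implied constant.
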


\begin{proof}
Let $\Psi:\mathbb{R}\rightarrow \mathbb{R}_{\ge 0}$ be a smooth function satisfying $\Psi(t)=1$ if $|t|\le 1$ and $\Psi(t)=0$ if $|t|\ge 2$. Set 
$$
w(t_1,...,t_n):=\prod\limits_{i=1}^n \Psi(t_i)
$$  
for $(t_1,...,t_n)\in \mathbb{R}^n$. Then
$$
\sharp\{(x_1,x_2,...,x_{n})\in \mathbb{Z}^n  : Q(x_1,...,x_n)=0, \ |x_i|\le N\}\le \sum\limits_{\substack{(x_1,...,x_n)\in \mathbb{Z}^n\\ Q(x_1,...,x_n)=0}} w\left(\frac{x_1}{N},...,\frac{x_n}{N}\right). 
$$
From the asymptotic formula in \cite[Theorem 5]{HB-circle}, it follows that the right-hand side above is bounded by $O_{\varepsilon}(N^{n-2+\varepsilon})$ whenever $n\ge 3$, which implies the claim.  
\end{proof}

\section{Proof of Proposition \ref{P1}}
Let $m-r$ be even. Writing ${\bf{x}}=p^{(m-r)/2}{\bf{z}}+{\bf{y}}$ with ${\bf z}\in \left(\mathbb{Z}_p/p^{(m+r)/2}\mathbb{Z}_p\right)^{n}$ and ${\bf y}\in \left(\mathbb{Z}_p/p^{(m-r)/2}\mathbb{Z}_p\right)^{n}$, we obtain the following chain of equations.  
\begin{align*}
S_{\bf{\alpha}}(f,p^m)&=\sum_{\substack{{\bf{x}}\in (\mathbb{Z}_p/p^m\mathbb{Z}_{p})^n\\ {\bf{x}}\equiv {\bf{\alpha}}\bmod p}}e_{p^m}(f({\bf{x}}))\notag\\
&=\sum_{{\bf{z}}\in(\mathbb{Z}_p/p^{(m+r)/2}\mathbb{Z}_{p})^n}\sum_{\substack{{\bf{y}}\in(\mathbb{Z}_p/p^{(m-r)/2}\mathbb{Z}_{p})^n\\{\bf{y}}\equiv {\bf{\alpha}}\bmod p}}e_{p^{m}}(f({p^{(m-r)/2}\bf{z}}+{\bf y}))\notag\\
&=\sum_{\substack{{\bf{y}}\in(\mathbb{Z}_p/p^{(m-r)/2}\mathbb{Z}_{p})^n\\{\bf{y}}\equiv {\bf{\alpha}}\bmod p}}\sum_{{\bf{z}}\in(\mathbb{Z}_p/p^{(m+r)/2}\mathbb{Z}_{p})^n}e_{p^{m}}(f({\bf{y}})+p^{(m-r)/2}{\bf{z}}\cdot\nabla f({\bf{y}})+\overline{2}p^{m-r}{\bf{z}}^t\text{Hesse}(f)({\bf{y}}){\bf{z}})\notag\\
&=\sum_{\substack{{\bf{y}}\in(\mathbb{Z}_p/p^{(m-r)/2}\mathbb{Z}_{p})^n\\{\bf{y}}\equiv {\bf{\alpha}}\bmod p}}e_{p^{m}}(f({\bf{y}}))\sum_{{\bf{z}}\in(\mathbb{Z}_p/p^{(m+r)/2}\mathbb{Z}_{p})^n}e_{p^{m}}(p^{(m-r)/2}{\bf{z}}\cdot\nabla f({\bf{y}}))\notag\\
&=p^{nr}\sum_{\substack{{\bf{y}}\in(\mathbb{Z}_p/p^{(m-r)/2}\mathbb{Z}_{p})^n\\{\bf{y}}\equiv {\bf{\alpha}}\bmod p}}e_{p^{m}}(f({\bf{y}}))\sum_{{\bf{z}}\in(\mathbb{Z}_p/p^{(m-r)/2}\mathbb{Z}_{p})^n}e_{p^{(m-r)/2}}(p^{-r}{\bf{z}}\cdot\nabla f({\bf{y}}))\notag\\
&=p^{n(m+r)/2}\sum_{\substack{{\bf{y}}\in(\mathbb{Z}_p/p^{(m-r)/2}\mathbb{Z}_{p})^n\\{\bf{y}}\equiv {\bf{\alpha}}\bmod p\\p^{-r}\nabla f({\bf{y}})\equiv 0\bmod p^{(m-r)/2}}}e_{p^{m}}(f({\bf{y}})).
\end{align*}
Note that in the Taylor development in the third line, the $l$-th order term of the Taylor polynomial vanishes modulo $p^m$ for $l\geq 3$. Indeed, for $l\geq 3$ and $r\leq m-2$, the order of $p$ is equal to ${{\frac{(m-r)l}{2}}-\sum_{k=1}^{\infty}[\log_{p^k}l]}-(m-r)$, and 
\begin{align}\label{vanish1}
\frac{(m-r)l}{2}-\sum_{k=1}^{\infty}[\log_{p^k}l]-(m-r)&\geq (m-r)\left(\frac{l}{2}-1\right)-\sum_{k=1}^{\infty}\log_{p^k}l\notag\\
&= (m-r)\left(\frac{l}{2}-1\right)-\sum_{k\leq \log_pl}^{\infty}\frac{\log l}{k\log p}\notag\\
&\geq 2\left(\frac{l}{2}-1\right)-\frac{\log l}{\log p}\sum_{k\leq \log_pl}^{\infty}\frac{1}{k}\notag\\
&\geq \left(l-2\right)-\frac{\log l}{\log p}(1+\log\log_pl)\notag\\
& \geq 0.
\end{align}
The second order term involving the Hesse matrix obviously disappears as well due to the factor $p^{m-r}$.  

Next, we consider the case when $m-r$ is odd, i.e. $2|(m-r-1)$. The calculations are similar as above, and writing ${\bf{x}}=p^{(m-r+1)/2}{\bf{z}}+{\bf y}$ with ${\bf{z}}\in \left(\mathbb{Z}_p/p^{(m+r-1)/2}\mathbb{Z}_{p}\right)^n$ and ${\bf y}\in(\mathbb{Z}_p/p^{(m-r+1)/2}\mathbb{Z}_{p})^n$, we now obtain the following chain of equations. 
\begin{align*}
S_{\bf{\alpha}}(f,p^m)&=\sum_{\substack{{\bf{x}}\in (\mathbb{Z}_p/p^m\mathbb{Z}_{p})^n\\ {\bf{x}}\equiv {\bf{\alpha}}\bmod p}}e_{p^m}(f({\bf{x}}))\notag\\
&=\sum_{{\bf{z}}\in(\mathbb{Z}_p/p^{(m+r-1)/2}\mathbb{Z}_{p})^n}\sum_{\substack{{\bf{y}}\in(\mathbb{Z}_p/p^{(m-r+1)/2}\mathbb{Z}_{p})^n\\{\bf{y}}\equiv {\bf{\alpha}}\bmod p}}e_{p^{m}}(f({p^{(m-r+1)/2}\bf{z}}+{\bf y}))\notag\\
&=\sum_{\substack{{\bf{y}}\in(\mathbb{Z}_p/p^{(m-r+1)/2}\mathbb{Z}_{p})^n\\{\bf{y}}\equiv {\bf{\alpha}}\bmod p}}\sum_{{\bf{z}}\in(\mathbb{Z}_p/p^{(m+r-1)/2}\mathbb{Z}_{p})^n}e_{p^{m}}(f({\bf{y}})+p^{(m-r+1)/2}{\bf{z}}\cdot\nabla f({\bf{y}}))\notag\\
&=p^{nr}\sum_{\substack{{\bf{y}}\in(\mathbb{Z}_p/p^{(m-r+1)/2}\mathbb{Z}_{p})^n\\{\bf{y}}\equiv {\bf{\alpha}}\bmod p}}e_{p^{m}}(f({\bf{y}}))\sum_{{\bf{z}}\in(\mathbb{Z}_p/p^{(m-r-1)/2}\mathbb{Z}_{p})^n}e_{p^{(m-r-1)/2}}(p^{-r}{\bf{z}}\cdot\nabla f({\bf{y}}))\notag\\
&=p^{(m+r-1)n/2}\sum_{\substack{{\bf{y}}\in(\mathbb{Z}_p/p^{(m-r+1)/2}\mathbb{Z}_{p})^n\\{\bf{y}}\equiv {\bf{\alpha}}\bmod p\\p^{-r}\nabla f({\bf{y}})\equiv 0\bmod p^{(m-r-1)/2}}}e_{p^{m}}(f({\bf{y}})).
\end{align*}
Similarly as before, the the $l$-th order terms with $l\ge 2$ in the Taylor series expansion will vanish. This proves Proposition \ref{P1}. 
\begin{flushright}
$\Box$
\end{flushright}

\section{Proof of Theorem \ref{MT1}}
Part (i) follows trivially from Proposition \ref{P1}. For the proof of part (ii),
assume that Hesse$(f)(\alpha)$ is non-singular, i.e. det(Hesse$(f)(\alpha))\not\equiv 0 \bmod{p}$. By the generalized Hensel lemma, ${\bf \alpha}$ has a unique lifting ${\bf{\alpha^*}}$ to a solution of the congruence $p^{-r}\nabla f({\bf{y}})\equiv 0\bmod p^m$. If $m-r$ is even, then from \eqref{basiceq}, we have 
\begin{align*}
S_{\bf{\alpha}}(f,p^m)=p^{n(m+r)/2}e_{p^{m}}(f({\bf{\alpha^*}})).
\end{align*} 
Similarly, if $m-r$ is odd, then from \eqref{basiceq}, we obtain
\begin{align*}
S_{\bf{\alpha}}(f,p^m)&=p^{(m+r-1)n/2}\sum_{\substack{{\bf{u}}\in(\mathbb{Z}_p/p\mathbb{Z}_{p})^n}}e_{p^{m}}(f({\bf{\alpha}^*}+p^{(m-r-1)/2}{\bf{u}}))\\
&=p^{(m+r-1)n/2}\sum_{\substack{{\bf{u}}\in(\mathbb{Z}_p/p\mathbb{Z}_{p})^n}}e_{p^{m}}(f({\bf{\alpha}^*})+p^{(m-r-1)/2}{\bf{u}}\cdot \nabla f({\bf{\alpha}^*})+p^{m-r-1}\overline{2}{\bf{u}}^t\text{Hesse}(f)({\bf{\alpha}^*}){\bf{u}})\\
&=p^{(m+r-1)n/2}e_{p^{m}}(f({\bf{\alpha}^*}))\sum_{\substack{{\bf{u}}\in(\mathbb{Z}_p/p\mathbb{Z}_{p})^n}}e_{p}(\overline{2}p^{-r}{\bf{u}}^t\text{Hesse}(f)({\bf{\alpha}^*}){\bf{u}}).
\end{align*} 
Take $A=\overline{2}p^{-r}\text{Hesse}(f)({\bf{\alpha}^*})$, which is a symmetric matrix. Hence there exists an orthogonal matrix over $\mathbb{F}_p$, say $P,$ such that
\begin{align*}
PAP^t=D=\begin{bmatrix}
\lambda_1&0&...&0\\
0&\lambda_2&...&0\\
\vdots &\vdots&\cdots &\vdots\\
0&0&...&\lambda_n
\end{bmatrix}
\end{align*} 
over $\mathbb{F}_p$, where $\lambda_1,...,\lambda_n$ are the eigenvalues of $A$ over $\mathbb{F}_p$. 
Let 
\begin{align*}
P=\begin{bmatrix}
R_1\\
R_2\\
\vdots \\
R_n
\end{bmatrix},
\end{align*}
where $R_i$ are the row vectors of $P$. Now ${\bf u}^tA{\bf u}={\bf{u}}^tP^tDP{\bf{u}}=\lambda_1(R_1{\bf{u}})^2+\lambda_2(R_2{\bf{u}})^2+\cdots+\lambda_n(R_n{\bf{u}})^2$.
As $P$ is invertible, the map ${\bf{u}}\mapsto P{\bf u}$ is a bijection on $(\mathbb{Z}_p/p\mathbb{Z}_{p})^n$. Hence we get 
\begin{align*}
\sum_{\substack{{\bf{u}}\in(\mathbb{Z}_p/p\mathbb{Z}_{p})^n}}e_{p}(\overline{2}p^{-r}{\bf{u}}^t\text{Hesse}(f)({\bf{\alpha}^*}){\bf{u}})&=\sum_{\substack{{\bf{u}}\in(\mathbb{Z}_p/p\mathbb{Z}_{p})^n}}e_{p}(\lambda_1(R_1{\bf{u}})^2+\lambda_2(R_2{\bf{u}})^2+\cdots+\lambda_n(R_n{\bf{u}})^2)\\
&=G_p(\lambda_1)\cdots G_p(\lambda_n)\\
&=\left(\frac{\lambda_1\cdots\lambda_n}{p}\right)G_p(1)^n.
\end{align*}
This implies 
\begin{align*}
S_{\bf{\alpha}}(f,p^m)=\left(\frac{\text{det}(\overline{2}p^{-r}\text{Hesse}(f)({\bf{\alpha}^*}))}{p}\right)p^{(m+r-1)n/2}e_{p^{m}}(f({\bf{\alpha}^*}))G_p(1)^n, \quad \mbox{if } m-r \mbox{ is odd,}
\end{align*}
which completes the proof of part (ii). \begin{flushright} $\Box$ \end{flushright}

\section{Proof of Theorem \ref{MT2}}

\subsection{Multiple Poisson summation} 
Using orthogonality properties of character sums, we start by writing 
\begin{equation*}
\begin{split}
T= & \sum\limits_{\substack{(x_1,x_2,..,x_{n})\in \mathbb{Z}^{n}\\(x_1,...,x_n)\not\equiv (0,...,0)\bmod p\\  \sum_{i,j=1}^n a_{ij}x_ix_j\equiv 0 \bmod{p^m}}}\prod_{i=1}^{n} \Phi\left(\frac{x_i-x_{0,i}}{N}\right)\\
&=\frac{1}{p^m} \sum\limits_{\substack{(x_1,x_2,..,x_{n})\in \mathbb{Z}^{n}\\(x_1,...,x_n)\not\equiv (0,...,0)\bmod p}}\prod_{i=1}^{n} \Phi\left(\frac{x_i-x_{0,i}}{N}\right)\sum_{h=1}^{p^m}e_{p^m}\left(h \sum_{i,j=1}^n a_{ij}x_ix_j\right).
\end{split}
\end{equation*}
Setting $x_i=y_i+k_ip^m$, where $k_i\in \mathbb{Z}$ and $1\leq y_i\leq p^m$, we apply Proposition \ref{Poisson} to obtain
\begin{equation*}
T= \frac{N^{n}}{p^{m(n+1)}}\sum\limits_{(k_1,..,k_{n})\in \mathbb{Z}^{n}} \prod_{i=1}^{n}\hat{\Phi}\left(\frac{k_iN}{p^m}\right)\sum_{h=1}^{p^m}\sum_{\substack{{y_1},...,{y_{n}}=1\\(y_1,...,y_n)\not\equiv (0,...,0)\bmod p}}^{p^m} e_{p^m}\left(h \sum_{i,j=1}^n a_{ij}y_iy_j+\sum_{i=1}^{n}k_i(y_i-x_{0,i})\right).
\end{equation*}
We decompose $T$ into two parts, 
\begin{equation} \label{divide}
T=T_0+U,
\end{equation}
where $T_0$ is the main term contribution corresponding to $(k_1,...,k_{n})=(0,...,0)$. Hence,
\begin{equation*}
T_0= \hat{\Phi}(0)^{n}\cdot \frac{N^{n}}{p^{m}}\cdot\frac{\#\mathcal{A}}{p^{m(n-1)}},
\end{equation*}
where $\mathcal{A}$ is defined as in Theorem \ref{MT2}. 
In the following, we write $(\mathbb{Z}^{n})^{\ast}=\mathbb{Z}^{n}\setminus (0,0,...,0).$
\subsection{Evaluation of exponential sums}
We write $Q(x_1,...,x_n)=X^tAX$, where $A$ is the corresponding symmetric matrix for the quadratic form $Q$ and $X=(x_1,...,x_n)^t$.  Now we look at the error contribution
\begin{align} \label{errorcont}
U&= \frac{N^{n}}{p^{m(n+1)}}\sum\limits_{(k_1,..,k_{n})\in (\mathbb{Z}^{n})^{*}} \prod_{i=1}^{n}\hat{\Phi}\left(\frac{k_iN}{p^m}\right)e_{p^m}\left(-\sum_{i=1}^{n}k_ix_{0,i}\right)\sum_{h=1}^{p^m} E(k_1,...,k_{n},h;p^m),
\end{align}
with 
\begin{equation*}
E(k_1,...,k_{n},h;p^m):=\sum_{\substack{{y_1},...,{y_{n}}=1\\(y_1,...,y_n)\not\equiv (0,...,0)\bmod p}}^{p^m} e_{p^m}\left(h \sum_{i,j=1}^n a_{ij}y_iy_j+\sum_{i=1}^{n}k_iy_i\right).
\end{equation*}
Assume that
$$
(k_1,...,k_{n},p^m)=p^r.
$$
Take 
\begin{align}\label{jan-2}
    f_{k_1,...,k_{n},h}(y_1,y_2,...,y_n):=h \sum_{i,j=1}^n a_{ij}y_iy_j+\sum_{i=1}^{n}k_iy_i.
\end{align}
The contributions of $r=m-1$ and $r=m$ to the right-hand side of \eqref{errorcont} are $O_{\varepsilon}(1)$ if $N\ge p^{m\varepsilon}$ by the rapid decay of $\hat\Phi$ as  $(k_1,...,k_{n})\neq(0,...,0)$. We fix $ l_i=k_i/p^r$. From \eqref{jan-2} we calculate 
\begin{align*}
\nabla f_{k_1,...,k_{n},h}=\left(h\frac{\partial Q}{\partial y_1}+p^rl_1,...,h\frac{\partial Q}{\partial y_n}+p^rl_n\right).
\end{align*}  
In the following, we write $f=f_{k_1,...,k_n,h}$ for simplicity. 
If $\text{\rm ord}_p(h)> r$ then $\text{\rm ord}_p(\nabla(f))= r$. So $p^{-r}\nabla(f)\equiv (l_1,...l_n) \bmod p, $ where $(l_1,...,l_n)\not \equiv (0,...,0)\bmod p$. Hence there is no critical point of $f_{k_1,...,k_{n},h}$. Now using $(i)$  of Theorem \ref{MT1}, we have $E(k_1,...,k_{n},h;p^m)=0$. On the other hand if $\text{\rm ord}_p(h)<r$ then $\text{\rm ord}_p(\nabla(f))= \text{\rm ord}_p(h)$. Hence 
$
p^{-\text{\rm ord}_p(h)}\nabla(f)\equiv {\boldsymbol{0}} \bmod p,
$ implies $AY\equiv {\boldsymbol{0}}\bmod p$ for $Y=(y_1 ~y_2~\cdots~y_n)^t$. Since $A$ is non-singular, $Y=(0,...,0)$ is the only critical point in this case. Hence $E(k_1,...,k_{n},h;p^m)=0.$ Thus from now onward we consider $\text{\rm ord}_p(h)=r$. Therefore $p^{\text{\rm ord}_p(\nabla(f))}=p^r=(h, p^m)$ and we write $ l=h/p^r$. So if $N\ge p^{m\varepsilon}$, we have 
\begin{align}\label{jan-3}
    U= \frac{N^{n}}{p^{m(n+1)}}\sum_{r=0}^{m-2}\sum\limits_{\substack{(l_1,..,l_{n})\in \mathbb{Z}^{n}\\(l_1,...,l_{n})\not\equiv(0,...,0)\bmod{p}}} \prod_{i=1}^{n}\hat{\Phi}\left(\frac{l_iN}{p^{m-r}}\right)e_{p^m}\left(-\sum_{i=1}^{n}l_ip^rx_{0,i}\right)\sum_{\substack{l=1\\(l,p)=1}}^{p^{m-r}}E(l_1p^r,...,l_{n}p^r,lp^r;p^m)+O_{\varepsilon}(1).
\end{align}
  We calculate that 
  \begin{align}\label{grad}
     \frac{\partial f_{l_1p^r,...,l_np^r, lp^r}}{\partial y_i}(y_1,y_2,...,y_n)=p^{r}\left(l\frac{\partial{Q}}{\partial{y_i}}+l_i\right), ~ \forall ~i.
  \end{align}
In the following, we will apply Proposition \ref{P1} to evaluate the exponential sums $E(l_1p^r,...,l_{n}p^r;p^m)$. Now consider the case when $m-r$ is even. Note that $A$ is the symmetric matrix associated with the quadratic form $Q(y_1,...,y_n)=\sum_{i,,j=1}^na_{ij}y_iy_j$. Hence $\frac{\partial Q}{\partial y_j}=2(a_{j1}y_1+a_{j2}y_2+\cdots+a_{jn}y_n)=2(a_{j1}~ a_{j2}~ \cdots~ a_{jn}) Y$. Therefore
$
p^{-r}\nabla f_{k_1,...,k_{n},h}(y_1,...,y_n)\equiv \boldsymbol{0} \bmod p^{m-r}
$
implies 
\begin{align*}
2lAY+L\equiv \boldsymbol{0} \bmod p^{m-r}.
\end{align*}
As $A$ is non-singular modulo $p$, we have
\begin{align}\label{jan-1}
Y\equiv -\overline{2l}\overline{\text{det}(A)}\text{adj}(A) L\bmod p^{m-r},
\end{align}
where $Y=(y_1 ~y_2~\cdots~y_n)^t$ and $L=(l_1~l_2~\cdots~ l_n)^t$. This is the only critical point for $f_{k_1,...,k_{n},h}(y_1,y_2,...,y_n).$ Now putting the values of $y_i$ in \eqref{jan-2}, we get
\begin{align*}
-\overline{2l\text{det}(A)}(1-\overline{2})L^t \text{adj}(A)^t L\equiv 0 \bmod  p^{m-r},
\end{align*}
where we take the inverse modulo $p^{m-r}$.
Hence  when $m-r$ is even, then using Proposition 1, we get 
\begin{align*}
E(l_1p^r,...,l_{n}p^r, lp^r;p^m):&=\sum_{\substack{{y_1},...,{y_{n}}=1}}^{p^m} e_{p^m}\left(lp^r \sum_{i,j=1}^n a_{ij}y_iy_j+p^r\sum_{i=1}^{n}l_iy_i\right)\\ &=p^{(m+r)n/2}e_{p^{m-r}}\left(C\cdot \overline{l}\cdot Q'(L)\right),
\end{align*}
where $C=-\overline{2\text{det}(A)}(1-\overline{2})$ and $Q'(L)=L^t \text{adj}(A)^tL$ which we refer to as the dual of the initial quadratic form.
\par Now we consider the case when $m-r$ is odd. We calculate that 
\begin{align*}
\text{Hesse}(f_{l_1p^r,...,l_{n}p^r, lp^r})(y_1,...,y_n)=2p^rlA.
\end{align*}
 Then using $(ii)$ of Theorem \ref{MT1}, we get 
\begin{align*}
E(l_1p^r,...,l_{n}p^r,lp^r;p^m)=\left(\frac{\text{det}(2lA)}{p}\right)p^{(m+r-1)n/2}e_{p^{m}}(f({\bf{y}}))G_p(1)^n,
\end{align*}
where ${\bf y}=Y^t$.
Similarly as in \eqref{jan-1}, we get 
\begin{align*}
E(l_1p^r,...,l_{n}p^r,lp^r;p^m)=\left(\frac{\text{det}(2A)}{p}\right)\left(\frac{l^n}{p}\right)p^{(m+r-1)n/2}e_{p^{m-r}}\left(C\cdot \overline{l}\cdot Q'(L)\right)G_p(1)^n,
\end{align*} 
where $Q'(L)$ and $C$ are as above. So combining the even and odd cases we can write $E(l_1p^r,...,l_{n}p^r,lp^r;p^m)$ as 
\begin{align}\label{jan-4}
E(l_1p^r,...,l_{n}p^r,lp^r;p^m)=\begin{cases}
p^{(m+r)n/2}e_{p^{m-r}}\left(C\cdot \overline{l}\cdot Q'(L)\right), &m-r ~\text{even};\\
p^{(m+r-1)n/2}\left(\frac{\text{det}(2A)}{p}\right)\left(\frac{l^n}{p}\right)e_{p^{m-r}}\left(C\cdot \overline{l}\cdot Q'(L)\right)G_p(1)^n, &m-r ~\text{odd}.
\end{cases}
\end{align}
 Observe that for any $a\in \mathbb{Z}$, we have
  \begin{align*}
    \sum_{\substack{x=1\\ x\not \equiv 0\bmod p}}^{p^k} e_{p^k}(xa)=0 \quad \mbox{ if } p^{k-1}\nmid a
  \end{align*}
and 
\begin{align*}
    \sum_{\substack{x=1\\ x\not \equiv 0\bmod p}}^{p^k} \left(\frac{x}{p}\right)e_{p^k}(xa)&=  
\sum\limits_{b=1}^{p-1} \left(\frac{b}{p}\right)\sum_{\substack{x=1\\ x\equiv b\bmod p}}^{p^k} e_{p^k}(xa)\\
&=\sum\limits_{b=1}^{p-1} \left(\frac{b}{p}\right)\sum_{\substack{y=1}}^{p^{k-1}} e_{p^k}((py+b)a)\\
&=\sum\limits_{b=1}^{p-1} \left(\frac{b}{p}\right)e_{p^k}(ba)\sum_{\substack{y=1}}^{p^{k-1}} e_{p^{k-1}}(ya)\\
&=0 \quad \mbox{ if } p^{k-1}\nmid a.
  \end{align*}
Using the above equalities, we deduce from \eqref{jan-4} that
 \begin{align*}
 \sum_{\substack{l=1\\(l,p)=1}}^{p^{m-r}}E(l_1p^r,...,l_{n}p^r,lp^r;p^m)=0
 \end{align*}
 whenever $p^{m-r-1}\nmid Q'(L)$. Otherwise, a trivial estimate gives
 \begin{align*}
  \sum_{\substack{l=1\\(l,p)=1}}^{p^{m-r}} |E(l_1p^r,...,l_{n}p^r,lp^r;p^m)|\leq p^{(m+r)n/2+(m-r)}.
 \end{align*}

\subsection{Error estimation}
The error $U$ can be written as 
\begin{align}\label{jan-6}
 U&= \frac{N^{n}}{p^{m(n+1)}}\sum_{r=0}^{m-2}\sum\limits_{\substack{(l_1,..,l_{n})\in \mathbb{Z}^{n}\\(l_1,...,l_{n})\not\equiv(0,...,0)\bmod{p}}} \prod_{i=1}^{n}\hat{\Phi}\left(\frac{l_iN}{p^{m-r}}\right)e_{p^m}\left(-\sum_{i=1}^{n}l_ip^rx_{0,i}\right)\sum_{\substack{l=1\\(l,p)=1}}^{p^{m-r}}E(l_1p^r,...,l_{n}p^r,lp^r;p^m)+O_{\varepsilon}(1)\notag\\
 &\ll\frac{N^{n}}{p^{m(n+1)}}\sum_{r=0}^{m-2}p^{(m+r)n/2+(m-r)}\sum\limits_{\substack{(l_1,..,l_{n})\in \mathbb{Z}^{n}\\(l_1,...,l_{n})\not\equiv(0,...,0)\bmod{p}\\Q'(L)\equiv 0 \bmod p^{m-r-1}}} \prod_{i=1}^{n}\hat{\Phi}\left(\frac{l_iN}{p^{m-r}}\right)+O_{\varepsilon}(1)
 .\end{align}
Set
\begin{align*}
L_r:=p^{m-r+m\varepsilon}N^{-1}.
\end{align*}
Now using the rapid decay of $\hat\Phi$, the above summations over $l_1,...,l_{n}$ in \eqref{jan-6} can be cut off at $|l_1|,...,|l_{n}|\le L_r$ at the cost of a negligible error if $m$ is large enough. Using \eqref{jan-6}, we obtain
 \begin{equation*} 
\begin{split}
U\ll & \frac{N^{n}}{p^{m(n+1)}}\sum\limits_{r=0}^{m-2}  p^{n(m+r)/2}p^{m-r} \sum\limits_{\substack{(l_1,..,l_{n})\in \mathbb{Z}^{n}\\(l_1,...,l_{n})\not\equiv(0,...,0)\bmod{p}\\|l_1|,...,|l_{n}|\le L_r\\Q'(L)\equiv 0 \bmod p^{m-r-1}}} 1 +O_{\varepsilon}(1).
\end{split}
\end{equation*}


Let $Q'(L)=L^t \text{adj}(A)^t L=\sum_{i,j}b_{i,j}l_il_j$, where the $b_{ij}$'s are only depending on $a_{ij}$'s. Consider $M=\text{max} \{|b_{i,j}|, 1\leq i,j\leq n\}$. Now if $L_r< \sqrt{p^{m-r-1}/
(Mn^2)}$, then the congruence above can be replaced by the equation $Q'(L)=0$. Certainly, this is the case if $N\ge p^{m/2+4m\varepsilon}$ and $m$ is large enough. Hence in this case, we have
\begin{equation*} 
\begin{split}
U\ll & \frac{N^{n}}{p^{m(n+1)}}\sum\limits_{r=0}^{m-2}  p^{n(m+r)/2}p^{m-r} \sum\limits_{\substack{(l_1,..,l_{n})\in \mathbb{Z}^{n}\\(l_1,...,l_{n})\not\equiv(0,...,0)\bmod{p}\\|l_1|,...,|l_{n}|\le L_r\\Q'(L)= 0 }} 1 +O_{\varepsilon}(1).
\end{split}
\end{equation*}
Finally, we apply Proposition \ref{Pytha} to bound $U$ by 
\begin{align*}
    U\ll & \frac{N^{n}}{p^{m(n+1)}}\sum\limits_{r=0}^{m-2}  p^{n(m+r)/2}p^{m-r} L_r^{n-2+\varepsilon}+O_{\varepsilon}(1)
\ll N^2p^{m(n-4)/2+mn\varepsilon}
\end{align*} 
if $\varepsilon\le 1$. 
This needs to be compared to the main term which is of size
$$
T_0\asymp \frac{N^{n}}{p^m}.
$$
Hence, if $N\ge p^{(1/2+3\varepsilon)m}$, then $U=o(T_0)$, which completes the proof of Theorem \ref{MT2} upon changing $3\varepsilon$ into $\varepsilon$.

\end{document}